
\documentclass{amsart}
\usepackage{amssymb}
\usepackage{amsmath}
\usepackage{amsfonts}
\usepackage{accents}

\setcounter{MaxMatrixCols}{10}

\newtheorem{theorem}{Theorem}
\theoremstyle{plain}

\newtheorem{definition}{Definition}
\newtheorem{example}{Example}

\newtheorem{lemma}{Lemma}

\newtheorem{proposition}{Proposition}

\numberwithin{equation}{section}
\input{tcilatex}

\begin{document}
\title{$\Delta $- convergence on time scale}
\author{M.Seyyit Seyyidoglu and N. \"{O}zkan TAN}
\address{Usak University,Faculty of Sciences and Arts,Department of
Mathematics, 1 Eylul Campus,64200,Usak, Turkey}
\email[M.Seyyit Seyyidoglu]{ seyyit.seyyidoglu@usak.edu.tr.\\
[N.\"{O}zkan TAN] nozkan.tan@usak.edu.tr.}
\subjclass[2000]{34N05}
\keywords{Time scale, Lebesgue $\Delta $-measure, Statistical convergence}

\begin{abstract}
In the present paper we will give some new notions, such as $\Delta $%
-convergence and $\Delta $-Cauchy, by using the $\Delta $-density and
investigate their relations. It is important to say that, the results
presented in this work generalize some of the results mentioned in the
theory of statistical convergence.
\end{abstract}

\maketitle

\section{Introduction and Background}

\bigskip In \cite{aa} Fast introduced an extension of the usual concept of
sequential limits which he called statistical convergence. In \cite{bb}
Schoenberg gave some basic properties of statistical convergence. In \cite%
{cc} Fridy introduced the concept of a statistically Cauchy sequence and
proved that it is equivalent to statistical convergence.

The theory of time scales was introduced by Hilger in his Ph. D. thesis in
1988, supervised by Auldbach \cite{ccc}. Measure theory on time scales was
first constructed by Guseinov \cite{dd}, then further studies were made by
Cabada-Vivero \cite{ee}, Rzezuchowski \cite{ff}. In \cite{gg} Deniz-Ufuktepe
define Lebesgue-Stieltjes $\Delta $ and $\bigtriangledown $-measures and by
using these measures \ they define an integral adapted to time scale,
specifically Lebesgue-Steltjes $\Delta $-integral.

A time scale $\mathbb{T}$ is an arbitrary nonempty closed subset of the real
numbers $\mathbb{R}$. The time scale $\mathbb{T}$ is a complete metric space
with the usual metric. We assume throughout that a time scale $\mathbb{T}$
has the topology that it inherits from the real numbers with the standart
topology.

For $t\in \mathbb{T}$ we define the \textit{forward jump operator} $\sigma :%
\mathbb{T\rightarrow T}$ by 
\begin{equation*}
\sigma \left( t\right) :=\inf \left \{ s\in \mathbb{T}\text{:~}s>t\right \} .
\end{equation*}

In this definition we put $\inf \{ \emptyset \}=\sup \mathbb{T}$.

For $a,b\in \mathbb{T}$ with $a\leq b$ we define the interval $\left[ a,b%
\right] $ in $\mathbb{T}$ by 
\begin{equation*}
\left[ a,b\right] =\left \{ t\in \mathbb{T}\text{: }a\leq t\leq b\right \} .
\end{equation*}%
Open intervals and half-open intervals etc. are defined accordingly.

Let $\mathbb{T}$ be time scale. Denote by $\mathcal{S}$ the family of all
left-closed and right-open intervals of $\mathbb{T}$ of the form $\left[
a,b\right) =\left \{ t\in \mathbb{T}\text{: }a\leq t<b\right \} $ with $%
a,b\in \mathbb{T}$ and $a\leq b$. The interval $\left[ a,a\right) $ is
understood as the empty set. $\mathcal{S}$ is a semiring of subsets of $%
\mathbb{T}$. Obviously the set function $m:\mathcal{S}\rightarrow \left[
0,\infty \right] $ defined by $m\left( \left[ a,b\right) \right) =b-a$ is a
countably additive measure. An outer measure $m^{\ast }:\mathcal{P}(\mathbb{T%
})\rightarrow \left[ 0,\infty \right] $ generated by $m$ is defined by 
\begin{equation*}
m^{\ast }(A)=\inf \left \{ \sum \limits_{n=1}^{\infty }m(A_{n}):(A_{n})\text{
is a sequence of }\mathcal{S}\text{ with }A\subset \bigcup
\limits_{n=1}^{\infty }A_{n}\right \} .
\end{equation*}%
If there is no sequence $(A_{n})$ of $\mathcal{S}$ such that $A\subset
\tbigcup \nolimits_{n=1}^{\infty }A_{n},$ \ then we let $m^{\ast }(A)=\infty
. $ We define the family $\mathcal{M}(m^{\ast })$ of all $m^{\ast }$%
-measurable subsets of $\mathbb{T}$, i.e. 
\begin{equation*}
\mathcal{M}(m^{\ast })=\left \{ E\subset \mathbb{T}:m^{\ast }(A)=m^{\ast
}(A\cap E)+m^{\ast }(A\cap E^{c})\text{ for all }A\subset \mathbb{T}\right
\} .
\end{equation*}%
The collection $\mathcal{M}(m^{\ast })$ of all $m^{\ast }$-measurable sets
is a $\sigma $-algebra and the restriction of $m^{\ast }$ to $\mathcal{M}%
(m^{\ast })$ which we denote by $\mu _{\Delta }$ is a countably additive
measure on $\mathcal{M}(m^{\ast })$. This measure $\mu _{\Delta }$ which is
Carath\'{e}odory extension of the set function $m$ associated with the
family $\mathcal{S}$ we call Lebesgue $\Delta $-measure on $\mathbb{T}$.

We call $f:\mathbb{T}\rightarrow \mathbb{R}$ is a measurable function, if $%
f^{-1}(\mathcal{O})\in \mathcal{M}(m^{\ast })$ for every open subset of $%
\mathcal{O}$ of $\mathbb{R}$.

\begin{theorem}
\label{em}(see \cite{dd}) For each $t_{0}$ in $\mathbb{T}-\left \{ \max 
\mathbb{T}\right \} $ the single point set $\left \{ t_{0}\right \} $ is $%
\Delta $-measurable and its $\Delta $-measure is given by%
\begin{equation*}
\mu _{\Delta }\left( \left \{ t_{0}\right \} \right) =\sigma (t_{0})-t_{0}.
\end{equation*}
\end{theorem}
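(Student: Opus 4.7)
My plan is to observe that when $t_0 \neq \max \mathbb{T}$, the set $\{s \in \mathbb{T}: s > t_0\}$ is nonempty and, since $\mathbb{T}$ is closed, its infimum $\sigma(t_0)$ belongs to $\mathbb{T}$. I would then split the proof into the two cases according to the jump behavior of $\sigma$ at $t_0$, and in each case exhibit $\{t_0\}$ as a member of the $\sigma$-algebra $\mathcal{M}(m^{\ast})$ built from $\mathcal{S}$.

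In the \emph{right-scattered} case $\sigma(t_0)>t_0$, the definition of $\sigma(t_0)$ as an infimum says precisely that no point of $\mathbb{T}$ lies in the open interval $(t_0,\sigma(t_0))$. Consequently
\begin{equation*}
\{t_0\}=[t_0,\sigma(t_0))=\{t\in\mathbb{T}:t_0\leq t<\sigma(t_0)\}\in \mathcal{S},
\end{equation*}
so $\{t_0\}$ is $\Delta$-measurable and $\mu_\Delta(\{t_0\})=m([t_0,\sigma(t_0)))=\sigma(t_0)-t_0$ directly from the definition of $m$ on the semiring.

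In the \emph{right-dense} case $\sigma(t_0)=t_0$, the infimum characterization produces a decreasing sequence $(b_n)\subset\mathbb{T}$ with $b_n>t_0$ and $b_n\downarrow t_0$. Then
\begin{equation*}
\{t_0\}=\bigcap_{n=1}^{\infty}[t_0,b_n),
\end{equation*}
a countable intersection of members of $\mathcal{S}$, hence of $\mathcal{M}(m^{\ast})$. From the definition of the outer measure,
\begin{equation*}
0\leq \mu_\Delta(\{t_0\})\leq m^{\ast}([t_0,b_n))\leq m([t_0,b_n))=b_n-t_0
\end{equation*}
for every $n$, and letting $n\to\infty$ gives $\mu_\Delta(\{t_0\})=0=\sigma(t_0)-t_0$.

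The step I expect to require the most care is the right-scattered case, not because it is deep, but because the whole statement really comes down to correctly using the closedness of $\mathbb{T}$ to place $\sigma(t_0)$ inside $\mathbb{T}$ and then verifying that the time-scale interval $[t_0,\sigma(t_0))$ coincides with the singleton $\{t_0\}$; once this identification is in hand the measurability and measure computation are immediate from the construction of $\mu_\Delta$ as the Carathéodory extension of $m$.
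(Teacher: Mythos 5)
Your argument is correct: the paper states this result without proof (it is imported from Guseinov's paper, cited as \cite{dd}), and your two-case treatment — identifying $\{t_{0}\}$ with the semiring element $[t_{0},\sigma(t_{0}))$ when $t_{0}$ is right-scattered, and writing $\{t_{0}\}=\bigcap_{n}[t_{0},b_{n})$ with $b_{n}\downarrow t_{0}$ when it is right-dense — is essentially the standard argument from that source. The only steps you lean on implicitly, namely that members of $\mathcal{S}$ are $m^{\ast}$-measurable and that $\mu_{\Delta}=m$ on $\mathcal{S}$, are exactly what the Carath\'{e}odory extension invoked in the paper's construction guarantees, so nothing is missing.
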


\begin{theorem}
(see \cite{dd}) If $a,b\in \mathbb{T}$ and $a\leq b$, then%
\begin{equation*}
\begin{array}{cc}
\mu _{\Delta }\left( \left[ a,b\right) \right) =b-a, & \mu _{\Delta }\left(
\left( a,b\right) \right) =b-\sigma (a).%
\end{array}%
\end{equation*}%
If $a,b\in \mathbb{T}-\left \{ \max \mathbb{T}\right \} $ and $a\leq b$, then%
\begin{equation*}
\begin{array}{cc}
\mu _{\Delta }\left( \left( a,b\right] \right) =\sigma (b)-\sigma (a), & \mu
_{\Delta }\left( \left[ a,b\right] \right) =\sigma (b)-a.%
\end{array}%
\end{equation*}
\end{theorem}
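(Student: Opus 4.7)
The plan is to deduce the first identity directly from the construction of $\mu_{\Delta}$, and to obtain the other three by decomposing each interval as a disjoint union of a member of $\mathcal{S}$ together with one or two singletons, then invoking Theorem~\ref{em} and finite additivity.

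First, because $[a,b)$ lies in the semiring $\mathcal{S}$ and $\mu_{\Delta}$ is the Carath\'{e}odory extension of the premeasure $m$ with $m([a,b)) = b-a$, the identity $\mu_{\Delta}([a,b)) = b-a$ is immediate.

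For the open interval, I would establish the set-theoretic identity
\begin{equation*}
[a,b) = \{a\} \sqcup (a,b) \qquad (a<b),
\end{equation*}
which holds because the definition $\sigma(a) = \inf\{s \in \mathbb{T} : s > a\}$ prevents any point of $\mathbb{T}$ from lying strictly between $a$ and $\sigma(a)$; hence the only element of $[a,b)$ missing from $(a,b)$ is $a$ itself. Finite additivity together with $\mu_{\Delta}(\{a\}) = \sigma(a)-a$ from Theorem~\ref{em} then yields $\mu_{\Delta}((a,b)) = (b-a) - (\sigma(a)-a) = b-\sigma(a)$. The two remaining formulas follow at once from
\begin{equation*}
(a,b] = (a,b) \sqcup \{b\}, \qquad [a,b] = [a,b) \sqcup \{b\},
\end{equation*}
combined with $\mu_{\Delta}(\{b\}) = \sigma(b) - b$; the hypothesis $b \ne \max \mathbb{T}$ in the second half of the statement is exactly what makes Theorem~\ref{em} applicable to the singleton $\{b\}$.

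The chief technical point is the set identity $[a,b) = \{a\} \sqcup (a,b)$, which rests entirely on the defining property of $\sigma$. Beyond this, a routine verification of degenerate cases ($a = b$, $\sigma(a) = a$, or an endpoint equal to $\max \mathbb{T}$) is needed to confirm that the stated formulas remain consistent in every configuration; once this bookkeeping is in place, nothing beyond finite additivity of $\mu_{\Delta}$ and Theorem~\ref{em} is used.
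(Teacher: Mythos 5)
The paper does not prove this theorem: it is quoted from Guseinov \cite{dd} and used as a black box, so there is no in-text argument to compare yours against. Your reconstruction is sound and is the natural one. The identity $\mu _{\Delta }([a,b))=b-a$ is indeed immediate because $[a,b)$ lies in the generating semiring $\mathcal{S}$ and the Carath\'{e}odory extension of a countably additive set function on a semiring agrees with it there (and makes every member of $\mathcal{S}$ measurable); the remaining three formulas then follow by adjoining or deleting the singletons $\{a\}$ and $\{b\}$ and applying Theorem \ref{em} with finite additivity. Two minor remarks. First, your appeal to the definition of $\sigma $ to justify $[a,b)=\{a\}\sqcup (a,b)$ is superfluous: that identity is immediate from the definitions of the intervals in $\mathbb{T}$, and $\sigma $ enters the computation only through $\mu _{\Delta }(\{a\})=\sigma (a)-a$. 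Second, the ``routine verification of degenerate cases'' you defer does not come out entirely clean, though the fault lies with the statement as printed rather than with your argument: if $a=b$ and $a$ is right-scattered, then $(a,b)=\emptyset $ has measure $0$ while $b-\sigma (a)=a-\sigma (a)<0$, so the open-interval formula actually requires $a<b$ (the other three formulas do survive $a=b$; note also that $a<b$ forces $a\neq \max \mathbb{T}$, so Theorem \ref{em} is legitimately applicable to $\{a\}$ there). It would be worth recording this restriction explicitly rather than asserting that all configurations are consistent.
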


It can easily seen from Theorem \ref{em} that, the measure of a subset of $%
\mathbb{N}$ is equal to its cardinality.

\section{$\Delta $-Density, $\Delta $-Convergence, $\Delta $-Cauchy}

It is well-known that the notions of statistical convergence and statistical
Cauchy are closely related with the density of the subset of $\mathbb{N}$.
In the present section first of all we will define the density of the subset
of the time scale. By using this definition, we will define $\Delta $%
-convergence and $\Delta $-Cauchy for a real valued function defined on the
time scale. After then we will show that these notions are equivalent.

Throughout this paper we consider the time scales, which are unbounded from
above and has a minimum point.

\bigskip Let $A$ be a $\Delta $-measurable subset of $\mathbb{T}$ and let $%
a=\min \mathbb{T}$. $\Delta $-density of $A$ in $\mathbb{T}$ (or briefly $%
\Delta $-density of $A)$ is defined by%
\begin{equation*}
\delta _{\Delta }(A)=\lim_{t\rightarrow \infty }\frac{\mu _{\Delta }(A(t))}{%
\sigma (t)-a}
\end{equation*}%
(if this limit exists) where%
\begin{equation}
A(t)=\left \{ s\in A:s\leq t\right \}  \label{lem}
\end{equation}%
and $t\in \mathbb{T}.$

From the identity $A(t)=A\cap \left[ a,t\right] $, measurability of $A$
implies measurability of $A(t)$.

If $f:\mathbb{T}\rightarrow \mathbb{R}$ is a function such that $f(t)$
satisfies property $P$ for all $t$ except a set of $\Delta $-density zero,
then we say that $f(t)$ satisfies $P$ for "$\Delta $-almost all $t$", and we
abbreviate this by $"\Delta $-$a.a.$ $t.".$ Let 
\begin{eqnarray*}
\mathcal{M}_{d} &:&=\left \{ A\in \mathbb{T}:\Delta \text{-density of }A%
\text{ is exists in }\mathbb{T~}\right \} , \\
\mathcal{M}_{d}^{0} &:&=\left \{ A\in \mathbb{T}:\delta _{\Delta
}(A)=0\right \} .
\end{eqnarray*}

\begin{lemma}
\label{qq}i) If $A,B\in $ $\mathcal{M}_{d}$ and $A\subset B,$ then $\delta
_{\Delta }(A)\leq \delta _{\Delta }(B),$

ii) If $A\in \mathcal{M}_{d},$ then $0\leq \delta _{\Delta }(A)\leq 1,$

iii) $\mathbb{T\in }\mathcal{M}_{d}$ and $\delta _{\Delta }(\mathbb{T})=1,$

iv) If $A\in \mathcal{M}_{d},$ then $A^{c}\in \mathcal{M}_{d}$ and $\delta
_{\Delta }(A)+\delta _{\Delta }(A^{c})=1,$

v) If $A,B\in $ $\mathcal{M}_{d}$ and $A\subset B,$ then $B-A\in \mathcal{M}%
_{d}$ and $\delta _{\Delta }(B-A)=\delta _{\Delta }(B)-\delta _{\Delta }(A),$

vi) If $\ A_{1},A_{2},...,A_{n}$ is a mutually disjoint sequence in $%
\mathcal{M}_{d}$ $,$ then $\tbigcup \nolimits_{k=1}^{n}A_{k}\in \mathcal{M}%
_{d}$ and 
\begin{equation*}
\delta _{\Delta }\left( \bigcup \limits_{k=1}^{n}A_{k}\right) =\sum
\limits_{k=1}^{n}\delta _{\Delta }(A_{k}),
\end{equation*}

vii) If $A_{1},A_{2},...,A_{n}\in \mathcal{M}_{d}$ and $\tbigcup%
\nolimits_{k=1}^{n}A_{k}\in \mathcal{M}_{d},$ then%
\begin{equation*}
\delta _{\Delta }\left( \bigcup \limits_{k=1}^{n}A_{k}\right) \leq \sum
\limits_{k=1}^{n}\delta _{\Delta }(A_{k}),
\end{equation*}

viii) If $A$ is measurable set and $B\in \mathcal{M}_{d}^{0}$ with $A\subset
B$ then $A\in \mathcal{M}_{d}^{0},$

ix) If $A_{1},A_{2},...,A_{n}\in \mathcal{M}_{d}^{0},$ then $\tbigcup
\nolimits_{k=1}^{n}A_{k},~\tbigcap \nolimits_{k=1}^{n}A_{k}\in \mathcal{M}%
_{d}^{0},$

x) Every bounded measurable subset of $\mathbb{T}$ belongs to $\mathcal{M}%
_{d}^{0},$

xi)If $A\in \mathcal{M}_{d}^{0}$ and $B\in \mathcal{M}_{d}$ then $\delta
_{\Delta }(A\cup B)=\delta _{\Delta }(B).$
\end{lemma}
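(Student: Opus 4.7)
The guiding idea is that every part follows mechanically once we observe two facts: (a) $A(t) = A \cap [a,t]$, so standard monotonicity, additivity and subadditivity of the Lebesgue $\Delta$-measure $\mu_\Delta$ transfer directly to the ``truncated'' sets $A(t)$; and (b) the denominator $\sigma(t)-a$ is nothing but $\mu_\Delta([a,t]) = \mu_\Delta(\mathbb{T}(t))$. With these two observations, each claim reduces to taking a limit in $t$ of an inequality or equation that holds pointwise.

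Parts (i)--(vii) I would handle by this transfer principle. For (i), $A\subset B$ gives $A(t)\subset B(t)$ and hence $\mu_\Delta(A(t))\le \mu_\Delta(B(t))$; divide by $\sigma(t)-a$ and pass to the limit. Part (iii) is just $\mathbb{T}(t)=[a,t]$ combined with observation (b). Parts (ii) and (iv) follow from (i) and (iii) together with the identity $\mu_\Delta([a,t]) = \mu_\Delta(A(t))+\mu_\Delta(A^{c}(t))$, which is legal because $\mu_\Delta([a,t])$ is finite. Part (v) uses $(B-A)(t)=B(t)-A(t)$ and finite subtraction of $\mu_\Delta$. For (vi), finite additivity of $\mu_\Delta$ on disjoint $A_k(t)$ combined with the fact that finite sums commute with limits does the job, and (vii) is the same argument but with subadditivity in place of additivity; crucially, the hypothesis in (vii) that $\bigcup_{k=1}^n A_k\in\mathcal{M}_d$ is assumed, so we only need the inequality.

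Parts (viii)--(xi) are the squeeze-theorem arguments, and (xi) is the only step that really needs a small trick. For (viii), $0\le \mu_\Delta(A(t))\le \mu_\Delta(B(t))$ forces both the existence of the limit and its value $0$. For (ix), the union case is a repeat of (viii) after combining with the subadditive bound $\mu_\Delta(\bigcup A_k(t))\le \sum \mu_\Delta(A_k(t))$, while the intersection case follows from (viii) applied to $\bigcap A_k\subset A_1$ (whose measurability is automatic since $\mathcal M(m^{\ast})$ is a $\sigma$-algebra). Part (x) is where we use the standing assumption that $\mathbb{T}$ is unbounded above: for a bounded measurable $A$, $\mu_\Delta(A)<\infty$ is constant for large $t$, while $\sigma(t)-a\to\infty$. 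Finally, for (xi), I would sandwich
\[
\mu_\Delta(B(t)) \;\le\; \mu_\Delta((A\cup B)(t)) \;\le\; \mu_\Delta(A(t))+\mu_\Delta(B(t)),
\]
divide by $\sigma(t)-a$, and note that both outer ratios tend to $\delta_\Delta(B)$ because the $\mu_\Delta(A(t))/(\sigma(t)-a)$ term vanishes. This simultaneously proves that the limit exists (so $A\cup B\in\mathcal{M}_d$) and identifies its value. The whole lemma therefore contains no genuine obstacle; the only care points are justifying the finiteness needed in (iv), the unboundedness of $\mathbb{T}$ needed in (x), and the order of events in (xi), where existence of the density on the left has to be extracted from the squeeze rather than assumed.
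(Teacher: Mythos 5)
Your proof is correct and follows essentially the same route as the paper: transfer monotonicity, additivity and subadditivity of $\mu _{\Delta }$ to the truncations $A(t)=A\cap \lbrack a,t]$, divide by $\sigma (t)-a=\mu _{\Delta }([a,t])$, and pass to the limit in $t$. If anything, your treatment of (viii) and (xi) is slightly more careful than the paper's, since you extract the existence of the relevant limits from the squeeze on the ratios rather than citing (i) and (vii), which formally presuppose that the sets in question already belong to $\mathcal{M}_{d}$.
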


\begin{proof}
i) Let $A,B\in $ $\mathcal{M}_{d}$ and $A\subset B$. \ Clearly $A(t)\subset
B(t)$ and since $\mu _{\Delta }$ is a measure function, one has $\mu
_{\Delta }(A)\leq \mu _{\Delta }(B)$. Thus we have;%
\begin{equation*}
\delta _{\Delta }(A)=\lim_{t\rightarrow \infty }\frac{\mu _{\Delta }(A(t))}{%
\sigma (t)-a}\leq \lim_{t\rightarrow \infty }\frac{\mu _{\Delta }(B(t))}{%
\sigma (t)-a}=\delta _{\Delta }(B)
\end{equation*}

ii) Note that $A(t)\subset \lbrack a,t].$ The required inequalities follows
from the following inequalities;%
\begin{equation*}
0\leq \frac{\mu _{\Delta }(A(t))}{\sigma (t)-a}\leq \frac{\mu _{\Delta
}([a,t])}{\sigma (t)-a}=\frac{\sigma (t)-a}{\sigma (t)-a}=1.
\end{equation*}%
iii) It is clear that $\mathbb{T}$ is measurable. The $\Delta $-density of $%
\mathbb{T}$ is obtained from the following equalities;%
\begin{equation*}
\delta _{\Delta }(\mathbb{T})=\lim_{t\rightarrow \infty }\frac{\mu _{\Delta
}(\mathbb{T}(t))}{\sigma (t)-a}=\lim_{t\rightarrow \infty }\frac{\mu
_{\Delta }(\mathbb{[}a,t])}{\sigma (t)-a}=1
\end{equation*}%
iv) Since $A$ is measurable so is $A^{c},$ namely $A^{c}(t)=\left \{ s\leq
t:s\in A^{c}\right \} $ is measurable. On the other hand $A(t)\cup
A^{c}(t)=[a,t]$ and%
\begin{equation*}
1=\frac{\mu _{\Delta }(\mathbb{[}a,t])}{\sigma (t)-a}=\frac{\mu _{\Delta
}(A(t))}{\sigma (t)-a}+\frac{\mu _{\Delta }(A^{c}(t))}{\sigma (t)-a}
\end{equation*}%
imply that the required statement holds as $t\rightarrow \infty .$

v) Since $A$ and $B$ are measurable so are $B-A.$The statement can be easily
shown by considering $A(t)\cup (B-A)(t)\cup B^{c}(t)=[a,t]$.

vi) Since the $\Delta $-density of for each subset $A_{k}$ exists, one can
write 
\begin{eqnarray*}
\sum \limits_{k=1}^{n}\delta _{\Delta }(A_{k}) &=&\sum
\limits_{k=1}^{n}\lim_{t\rightarrow \infty }\frac{\mu _{\Delta }(A_{k}(t))}{%
\sigma (t)-a} \\
&=&\lim_{t\rightarrow \infty }\sum \limits_{k=1}^{n}\frac{\mu _{\Delta
}(A_{k}(t))}{\sigma (t)-a} \\
&=&\lim_{t\rightarrow \infty }\frac{\mu _{\Delta }\left( \bigcup
\limits_{k=1}^{n}A_{k}(t)\right) }{\sigma (t)-a} \\
&=&\lim_{t\rightarrow \infty }\frac{\mu _{\Delta }\left( \left( \bigcup
\limits_{k=1}^{n}A_{k}\right) (t)\right) }{\sigma (t)-a} \\
&=&\delta _{\Delta }\left( \bigcup \limits_{k=1}^{n}A_{k}\right)
\end{eqnarray*}

vii) The proof is similar to that of the previous proof.

viii) It can be easily seen from (i).

ix) Considering $\delta _{\Delta }(A_{k})=0$ for $k=1,2,...,n$ and (vii),
one can obtain $\tbigcup \nolimits_{k=1}^{n}A_{k}\in \mathcal{M}_{d}^{0}$.
And $\tbigcap \nolimits_{k=1}^{n}A_{k}\in \mathcal{M}_{d}^{0}$ can be obtain
from (viii).

x) Let $A$ be a bounded set. For a sufficiently large $M\in \mathbb{T}$ we
can write $A\subset \lbrack a,M]$. Then one has%
\begin{equation*}
0\leq \frac{\mu _{\Delta }(A(t))}{\sigma (t)-a}\leq \frac{\mu _{\Delta
}([a,M])}{\sigma (t)-a}\rightarrow 0~\ ~~\ ~~~(t\rightarrow \infty ),
\end{equation*}
this implies that $\delta _{\Delta }(A)=0$.

xi) (i) and (vii) yields $\delta _{\Delta }(B)\leq $ $\delta _{\Delta
}(A\cup B)\leq $ $\delta _{\Delta }(A)+$ $\delta _{\Delta }(B)=$ $\delta
_{\Delta }(B)$. This completes the proof.
\end{proof}

It is clear that the familiy $\mathcal{M}_{d}^{0}$ is a ring of subsets of $%
\mathbb{T}$. According to (iv), the $\Delta $-density of the complement of a
subset whose $\Delta $-density is 0 is equal to $1$, $\mathcal{M}_{d}^{0}$
is not closed under the operation complement. So it is not an algebra. Note
that the $\Delta $-density of a subset of $\mathbb{N}$ is equal to its
natural density.

\begin{example}
Let $\mathbb{T}=[0,\infty ),$ $l$ and $r$ be arbitrary two positive real
numbers.Let also $A=\tbigcup \nolimits_{n=0}^{\infty }A_{n},$ where $%
A_{n}=[nl+nr,(n+1)l+nr].$ According to Lemma \ref{qq}-(x) for each $A_{n}$
is bounded and so $\delta _{\Delta }(A_{n})=0$. In addition,let $A(t)$
defined\ as in (\ref{lem}), we have%
\begin{equation*}
\mu _{\Delta }\left( A\left( t\right) \right) =\left \{ 
\begin{array}{cc}
t-nr~~\  \ , & nl+nr\leq t\leq (n+1)l+nr \\ 
(n+1)l\  \  \ , & (n+1)l+nr\leq t\leq (n+1)l+(n+1)r%
\end{array}%
\right.
\end{equation*}%
$(n=0,1,2,...)$ and hence%
\begin{equation*}
\delta _{\Delta }(A)=\lim_{t\rightarrow \infty }\frac{\mu _{\Delta }(A(t))}{%
\sigma (t)-a}=\lim_{t\rightarrow \infty }\frac{\mu _{\Delta }(A(t))}{t-0}=%
\frac{l}{l+r}.
\end{equation*}%
Note that, since%
\begin{equation*}
\frac{l}{l+r}=\delta _{\Delta }\left( A\right) >\sum \limits_{n=0}^{\infty
}\delta _{\Delta }\left( A_{n}\right) =0
\end{equation*}%
$\delta _{\Delta }$ does not define a measure.
\end{example}

\begin{example}
Let $\mathbb{T}=\mathbb{N}$. The $\Delta $-density of $A=\{2,4,6,...\}$ in $%
\mathbb{N}$ is given by 
\begin{equation*}
\delta _{\Delta }(A)=\lim_{t\rightarrow \infty }\frac{\mu _{\Delta }(A(t))}{%
\sigma (t)-a}=\lim_{t\rightarrow \infty }\frac{\left[ t/2\right] }{t}=\frac{1%
}{2}.
\end{equation*}
\end{example}

\begin{definition}
($\Delta $-convergence) The function $f:\mathbb{T}\rightarrow \mathbb{R}$ is 
$\Delta $-convergent to the number $L$ provided that for each $\varepsilon
>0,$ there exists $K_{\varepsilon }\subset \mathbb{T}$ such that $\delta
_{\Delta }(K_{\varepsilon })=1$ and $\left \vert f(t)-L\right \vert
<\varepsilon $ holds for all $t\in K_{\varepsilon }$.
\end{definition}

We will use notation $\Delta $-$\lim_{t\rightarrow \infty }f(t)=L$ .

\begin{definition}
($\Delta $-Cauchy) The function $f:\mathbb{T}\rightarrow \mathbb{R}$ is $%
\Delta $-Cauchy provided that for each $\varepsilon >0,$ there exists $%
K_{\varepsilon }\subset \mathbb{T}$ and $t_{0}\in \mathbb{T}$ such that $%
\delta _{\Delta }(K_{\varepsilon })=1$ and $\left \vert f(t)-f(t_{0})\right
\vert <\varepsilon $ holds for all $t\in K_{\varepsilon }$ .
\end{definition}

\begin{proposition}
Let $f:\mathbb{T}\rightarrow \mathbb{R}$ be a measurable function. $\Delta $-%
$\lim_{t\rightarrow \infty }f(t)=L$ if and only if for each $\varepsilon >0, 
$ $\  \delta _{\Delta }\left( \left \{ t\in \mathbb{T}:\left \vert
f(t)-L\right \vert \geq \varepsilon \right \} \right) =0.$
\end{proposition}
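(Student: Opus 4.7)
The plan is to reduce each direction to an application of parts (iv) and (viii) of Lemma \ref{qq}, after noting that measurability of $f$ ensures that the superlevel set $E_\varepsilon := \{t\in\mathbb{T}: |f(t)-L|\geq \varepsilon\}$ is $\Delta$-measurable (it is the preimage of a closed set under a measurable function, namely the complement of the preimage of the open interval $(L-\varepsilon, L+\varepsilon)$).

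For the sufficiency direction, I would assume $\delta_\Delta(E_\varepsilon)=0$ for every $\varepsilon>0$. Setting $K_\varepsilon := E_\varepsilon^c = \{t\in\mathbb{T}: |f(t)-L|<\varepsilon\}$, part (iv) of Lemma \ref{qq} gives $\delta_\Delta(K_\varepsilon) = 1-\delta_\Delta(E_\varepsilon)=1$, and by construction $|f(t)-L|<\varepsilon$ for every $t\in K_\varepsilon$; this is exactly the definition of $\Delta$-$\lim_{t\to\infty} f(t) = L$.

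For the necessity direction, I would fix $\varepsilon>0$ and take the set $K_\varepsilon$ furnished by the definition of $\Delta$-convergence, so that $\delta_\Delta(K_\varepsilon)=1$ and $|f(t)-L|<\varepsilon$ for all $t\in K_\varepsilon$. The latter inclusion implies $E_\varepsilon \subset K_\varepsilon^c$. By (iv), $K_\varepsilon^c\in\mathcal{M}_d^0$; since $E_\varepsilon$ is measurable and contained in a set of $\Delta$-density zero, part (viii) of Lemma \ref{qq} gives $E_\varepsilon\in\mathcal{M}_d^0$, i.e.\ $\delta_\Delta(E_\varepsilon)=0$, as required.

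I do not expect any serious obstacle. The only subtle point is that, to invoke (viii), one must know a priori that $E_\varepsilon$ is itself $\Delta$-measurable; this is precisely where the standing hypothesis that $f$ is measurable enters the proof. Everything else is a direct rewriting in terms of complements and monotonicity of density.
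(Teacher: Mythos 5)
Your proof is correct and takes essentially the same route as the paper's: the necessity direction passes from $K_{\varepsilon}\subset \left\{ t\in\mathbb{T}:\left\vert f(t)-L\right\vert <\varepsilon \right\}$ to the complement via Lemma \ref{qq}, and the sufficiency direction (which the paper dismisses as straightforward) is the same complementation argument run in reverse. If anything, your explicit appeal to part (viii) to guarantee that the $\Delta$-density of $E_{\varepsilon}$ actually exists is slightly more careful than the paper's one-line assertion that the superset of a density-one set has density one.
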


\begin{proof}
Let $\Delta $-$\lim_{t\rightarrow \infty }f(t)=L$ and $\varepsilon >0$ be
given. In this case there exists a subset $K_{\varepsilon }\subset \mathbb{T}
$ such that $\delta _{\Delta }(K_{\varepsilon })=1$ and $\left \vert
f(t)-L\right \vert <\varepsilon $ holds for all $t\in K_{\varepsilon }$.
Since $K_{\varepsilon }\subset \left \{ t\in \mathbb{T}:\left \vert
f(t)-L\right \vert <\varepsilon \right \} $ we obtain $\delta _{\Delta
}(\left \{ t\in \mathbb{T}:\left \vert f(t)-L\right \vert <\varepsilon
\right \} )=1$. Hence we get $\delta _{\Delta }(\left \{ t\in \mathbb{T}%
:\left \vert f(t)-L\right \vert \geq \varepsilon \right \} )=0$.

Other case of the proof is straightforward.
\end{proof}

\begin{proposition}
Let $f:\mathbb{T}\rightarrow \mathbb{R}$ be a measurable function. $f$ is $%
\Delta $-Cauchy if and only if for each $\varepsilon >0,$ there exists $%
t_{0}\in \mathbb{T}$ $\ $such that $\delta _{\Delta }\left( \left \{ t\in 
\mathbb{T}:\left \vert f(t)-f(t_{0})\right \vert \geq \varepsilon \right \}
\right) =0.$
\end{proposition}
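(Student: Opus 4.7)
The plan is to mirror the proof of the previous proposition about $\Delta$-convergence, since the statements are structurally identical with $L$ replaced by $f(t_{0})$. Measurability of $f$ ensures that the sublevel and superlevel sets of $|f(t)-f(t_{0})|$ are $\Delta$-measurable, so Lemma \ref{qq} applies whenever the density of such a set is known to exist.

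For the forward direction, I would assume $f$ is $\Delta$-Cauchy and fix $\varepsilon>0$. The definition supplies $t_{0}\in\mathbb{T}$ and $K_{\varepsilon}\subset\mathbb{T}$ with $\delta_{\Delta}(K_{\varepsilon})=1$ such that $|f(t)-f(t_{0})|<\varepsilon$ for all $t\in K_{\varepsilon}$. Hence
\[
K_{\varepsilon}\subset\{t\in\mathbb{T}:|f(t)-f(t_{0})|<\varepsilon\}\subset\mathbb{T},
\]
and squeezing the ratios $\mu_{\Delta}(K_{\varepsilon}(t))/(\sigma(t)-a)$ and $\mu_{\Delta}(\mathbb{T}(t))/(\sigma(t)-a)$ (both tending to $1$) shows that the middle set lies in $\mathcal{M}_{d}$ with density $1$. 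Applying Lemma \ref{qq}(iv) to its complement gives $\delta_{\Delta}(\{t\in\mathbb{T}:|f(t)-f(t_{0})|\geq\varepsilon\})=0$.

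For the converse, fix $\varepsilon>0$ and let $t_{0}$ be as supplied, with $A_{\varepsilon}:=\{t:|f(t)-f(t_{0})|\geq\varepsilon\}$ satisfying $\delta_{\Delta}(A_{\varepsilon})=0$. Setting $K_{\varepsilon}:=A_{\varepsilon}^{c}=\{t:|f(t)-f(t_{0})|<\varepsilon\}$, Lemma \ref{qq}(iv) yields $\delta_{\Delta}(K_{\varepsilon})=1$, and by construction $|f(t)-f(t_{0})|<\varepsilon$ on $K_{\varepsilon}$, verifying the $\Delta$-Cauchy condition.

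The only mildly non-routine point is the forward direction: the definition only guarantees a witness set $K_{\varepsilon}$ of density $1$, not that the larger set $\{|f(t)-f(t_{0})|<\varepsilon\}$ itself has a defined density. The squeeze argument above, relying on monotonicity of $\mu_{\Delta}$ together with Lemma \ref{qq}(iii), handles this without difficulty; no other step presents an obstacle.
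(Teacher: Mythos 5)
Your proof is correct and follows essentially the same route the paper takes for the analogous $\Delta$-convergence proposition (the paper states this one without proof, treating it as the same argument with $L$ replaced by $f(t_{0})$). Your squeeze argument establishing that the intermediate set $\{t:|f(t)-f(t_{0})|<\varepsilon\}$ actually has a well-defined density equal to $1$ is a point the paper glosses over, but it does not change the approach.
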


\begin{example}
Let $\mathbb{I}_{[0,\infty )}$ be irrational numbers and $\mathbb{Q}%
_{[0,\infty )}$ be rational numbers in $[0,\infty )$. Let consider the
function $f:\mathbb{T}=[0,\infty )\rightarrow \mathbb{R}$ defined as
following,%
\begin{equation*}
f(t)=\left \{ 
\begin{array}{cc}
0 & t\in \mathbb{Q}_{[0,\infty )} \\ 
1 & t\in \mathbb{I}_{[0,\infty )}%
\end{array}%
\right. .
\end{equation*}%
Since $\mu _{\Delta }\left( \mathbb{Q}_{[0,\infty )}\right) =0,$ the density
of the subset $\mathbb{Q}_{[0,\infty )}$ in the time scale $\mathbb{T}$ is
zero. This implies that $\delta _{\Delta }(\mathbb{I}_{[0,\infty )})=1$. So,
for each $\varepsilon >0$ and for all $t\in \mathbb{I}_{[0,\infty )}$ one
has $0=\left \vert f(t)-1\right \vert <\varepsilon $ and as a corollary, we
get $\Delta $-$\lim_{t\rightarrow \infty }f(t)=1.$
\end{example}

\begin{proposition}
The $\Delta $-limit of a function $f:\mathbb{T}\rightarrow \mathbb{R}$ is
unique.
\end{proposition}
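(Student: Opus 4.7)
The plan is a standard contradiction argument adapted to the $\Delta$-density setting, making essential use of the closure properties from Lemma \ref{qq}.

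Suppose, for contradiction, that $f$ is $\Delta$-convergent to two distinct numbers $L_{1}$ and $L_{2}$. Set $\varepsilon = \tfrac{1}{2}|L_{1}-L_{2}|>0$. By the definition of $\Delta$-convergence applied twice, there exist sets $K_{1},K_{2}\subset \mathbb{T}$ with $\delta_{\Delta}(K_{1})=\delta_{\Delta}(K_{2})=1$ such that $|f(t)-L_{i}|<\varepsilon$ for all $t\in K_{i}$, $i=1,2$.

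The key step is to produce a single point $t\in\mathbb{T}$ where both estimates hold simultaneously; for this I would show $K_{1}\cap K_{2}\neq\emptyset$. By Lemma \ref{qq}(iv), $K_{1}^{c},K_{2}^{c}\in\mathcal{M}_{d}^{0}$. By Lemma \ref{qq}(ix), $K_{1}^{c}\cup K_{2}^{c}\in\mathcal{M}_{d}^{0}$, i.e.\ $(K_{1}\cap K_{2})^{c}$ has $\Delta$-density zero. Applying Lemma \ref{qq}(iv) once more, $K_{1}\cap K_{2}\in\mathcal{M}_{d}$ with $\delta_{\Delta}(K_{1}\cap K_{2})=1$. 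In particular $K_{1}\cap K_{2}$ is nonempty (a set of $\Delta$-density $1$ cannot be empty, since by Lemma \ref{qq}(x) bounded measurable sets, and \emph{a fortiori} the empty set, have density $0$).

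Pick any $t\in K_{1}\cap K_{2}$. The triangle inequality gives
\begin{equation*}
|L_{1}-L_{2}|\leq |f(t)-L_{1}|+|f(t)-L_{2}|<2\varepsilon = |L_{1}-L_{2}|,
\end{equation*}
which is the desired contradiction. I do not anticipate any genuine obstacle here: the only non-routine point is recognizing that $\mathcal{M}_{d}^{0}$ is closed under finite unions (Lemma \ref{qq}(ix)), which is exactly what makes the intersection of two density-one sets still have density one, and hence be nonempty.
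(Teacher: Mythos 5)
Your proof is correct and follows essentially the same route as the paper: intersect the two density-one sets $K_{1}$ and $K_{2}$, observe that the intersection is nonempty, and conclude via the triangle inequality (the paper runs this with an arbitrary $\varepsilon$ rather than by contradiction, an immaterial difference). In fact your justification that $K_{1}\cap K_{2}\neq\emptyset$, via Lemma \ref{qq} (iv), (ix) and (iv) again, is more complete than the paper's, which cites only Lemma \ref{qq}-(iv) for that step.
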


\begin{proof}
Let $\Delta $-$\lim_{t\rightarrow \infty }f(t)=L_{1}$ and $\Delta $-$%
\lim_{t\rightarrow \infty }f(t)=L_{2}$. Let $\varepsilon >0$ be given. Then
there exist subsets $K_{1},$ $K_{2}$ $\subset \mathbb{T}$ such that for
every $t\in K_{1}$ with $\left \vert f(t)-L_{1}\right \vert <\varepsilon /2$
and for every $t\in K_{2}$ with $\left \vert f(t)-L_{1}\right \vert
<\varepsilon /2,$ where $\delta _{\Delta }(K_{1})=1$ and $\delta _{\Delta
}(K_{2})=1.$ From Lemma \ref{qq}-(\textit{iv}) we have $K_{1}\cap K_{2}\neq
\varnothing .$ Thus for every $t\in K_{1}\cap K_{2}$ one has 
\begin{equation*}
\left \vert L_{1}-L_{2}\right \vert \leq \left \vert f(t)-L_{1}\right \vert
+\left \vert f(t)-L_{2}\right \vert <\varepsilon .
\end{equation*}
Thus $L_{1}=L_{2}.$
\end{proof}

\begin{proposition}
If  $f,g:\mathbb{T}\rightarrow \mathbb{R}$ with $\  \Delta $-$%
\lim_{t\rightarrow \infty }f(t)=L_{1}$ and $\Delta $-$\lim_{t\rightarrow
\infty }g(t)=L_{2}$, then the following statements hold:
\end{proposition}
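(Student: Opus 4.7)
The plan is to work directly from the set-theoretic form of the definition of $\Delta$-convergence, reducing each assertion to a statement about a single set of $\Delta$-density one. The key intermediate fact I will use repeatedly is that if $K_1,K_2\subset\mathbb{T}$ both have $\Delta$-density one, then so does $K_1\cap K_2$: by Lemma \ref{qq}-(\textit{iv}) we have $\delta_\Delta(K_1^c)=\delta_\Delta(K_2^c)=0$, by (\textit{ix}) their union $K_1^c\cup K_2^c$ lies in $\mathcal{M}_d^0$, and a final application of (\textit{iv}) gives $\delta_\Delta(K_1\cap K_2)=1$. Two density-one sets can therefore always be intersected into a common ``good'' set.

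For the expected sum identity $\Delta$-$\lim_{t\rightarrow\infty}(f+g)(t)=L_1+L_2$, I would fix $\varepsilon>0$, apply the definition of $\Delta$-convergence to $f$ and $g$ at the scale $\varepsilon/2$ to obtain sets $K_1,K_2$ of density one, and on $K_1\cap K_2$ conclude by the triangle inequality. The scalar-multiple identity $\Delta$-$\lim_{t\rightarrow\infty}\alpha f(t)=\alpha L_1$ is then immediate by rescaling $\varepsilon$ by $|\alpha|$ (the $\alpha=0$ case being trivial).

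The main obstacle is the product identity $\Delta$-$\lim_{t\rightarrow\infty}(fg)(t)=L_1L_2$, because $f$ need not be globally bounded on $\mathbb{T}$. My workaround is to first apply the definition of $\Delta$-convergence to $f$ with $\varepsilon=1$ to produce a density-one set $K_0$ on which $|f(t)|\leq |L_1|+1$. Given $\varepsilon>0$, I would then choose density-one sets $K_1,K_2$ on which $|f(t)-L_1|<\varepsilon/(2(|L_2|+1))$ and $|g(t)-L_2|<\varepsilon/(2(|L_1|+1))$ respectively, and work on the triple intersection $K_0\cap K_1\cap K_2$, still of density one by iterating the intersection remark. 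On this set the standard decomposition
\[
|f(t)g(t)-L_1L_2|\leq |f(t)|\,|g(t)-L_2|+|L_2|\,|f(t)-L_1|
\]
produces a bound strictly below $\varepsilon$.

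If a quotient statement $\Delta$-$\lim_{t\rightarrow\infty}(f/g)(t)=L_1/L_2$ is among the assertions (under $L_2\neq 0$), the same scheme applies once one secures a density-one set on which $|g(t)|\geq |L_2|/2$, obtained by applying the definition to $g$ at the scale $|L_2|/2$; the usual algebraic rearrangement then reduces the quotient back to a product-and-sum estimate of the type already handled.
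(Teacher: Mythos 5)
Your argument is correct. The paper states this proposition without proof, and your route — intersecting the two density-one witness sets $K_1\cap K_2$ (justified via Lemma \ref{qq}-(\textit{iv}) and (\textit{ix})) and applying the triangle inequality at scale $\varepsilon/2$ — is exactly the standard argument one would supply; the scalar case by rescaling $\varepsilon$ is likewise fine. Note, however, that the proposition's actual assertions are only the sum and the scalar multiple, so your (correct, and more delicate) treatment of products and quotients via a preliminary density-one set on which $f$ is bounded, respectively $g$ is bounded away from zero, goes beyond what the paper claims here.
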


i) $\Delta $-$\lim_{t\rightarrow \infty }\left( f(t)+g(t)\right)
=L_{1}+L_{2},$

ii) $\Delta $-$\lim_{t\rightarrow \infty }\left( cf(t)\right) =cL_{1}.(c\in 
\mathbb{R}).$

\begin{proposition}
If $f:\mathbb{T}\rightarrow \mathbb{R}$ with $\lim \nolimits_{t\rightarrow
\infty }f(t)=L$ then $\Delta $-$\lim_{t\rightarrow \infty }f(t)=L.$
\end{proposition}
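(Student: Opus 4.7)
The plan is to unpack the ordinary limit directly into the definition of $\Delta$-convergence, using that bounded measurable sets have $\Delta$-density zero. By hypothesis, for every $\varepsilon>0$ there exists $t_0\in\mathbb{T}$ such that $|f(t)-L|<\varepsilon$ for all $t\in\mathbb{T}$ with $t\geq t_0$. The natural candidate for the exceptional-free set is therefore
\begin{equation*}
K_\varepsilon := \{t\in\mathbb{T} : t\geq t_0\} = \mathbb{T}\setminus[a,t_0),
\end{equation*}
where $a=\min\mathbb{T}$.

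Next I would verify that $K_\varepsilon$ has $\Delta$-density one. The complement $K_\varepsilon^c = [a,t_0)\cap\mathbb{T}$ is a bounded $\Delta$-measurable subset of $\mathbb{T}$, so by Lemma \ref{qq}-(x) it lies in $\mathcal{M}_d^0$, that is, $\delta_\Delta(K_\varepsilon^c)=0$. Applying Lemma \ref{qq}-(iv) then yields $K_\varepsilon\in\mathcal{M}_d$ with $\delta_\Delta(K_\varepsilon)=1$.

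Finally, since $|f(t)-L|<\varepsilon$ for every $t\in K_\varepsilon$ by construction, the definition of $\Delta$-convergence is satisfied, and we conclude $\Delta$-$\lim_{t\to\infty}f(t)=L$. There is no real obstacle here: the only subtlety worth flagging is ensuring $K_\varepsilon$ (equivalently $[a,t_0)\cap\mathbb{T}$) is $\Delta$-measurable so that Lemma \ref{qq}-(x) applies, which is immediate since half-open intervals of $\mathbb{T}$ lie in the generating semiring $\mathcal{S}$.
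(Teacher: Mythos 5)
Your argument is correct and is essentially identical to the paper's proof: both take $K_\varepsilon$ to be the tail of $\mathbb{T}$ beyond $t_0$, observe that its complement is a bounded measurable set with $\Delta$-density zero by Lemma \ref{qq}-(x), and conclude $\delta_\Delta(K_\varepsilon)=1$ via Lemma \ref{qq}-(iv). The only (immaterial) difference is your use of $t\geq t_0$ versus the paper's $t>t_0$.
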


\begin{proof}
Let $\lim \nolimits_{t\rightarrow \infty }f(t)=L$. In this case for a given $%
\varepsilon >0$ we can find a $t_{0}\in \mathbb{T}$ such that $\left \vert
f(t)-L\right \vert <\varepsilon $ holds for every $t>t_{0}$. The set $%
K_{\varepsilon }=\left \{ t\in \mathbb{T}:t>t_{0}\right \} $ is measurable
and from Lemma \ref{qq} (iv) and (\textit{x}) one has $\delta _{\Delta
}(K_{\varepsilon })=1.$ By the definition of $\Delta $-convergence we get\ $%
\Delta $-$\lim_{t\rightarrow \infty }f(t)=L.$
\end{proof}

\begin{theorem}
\label{ana} Let $f:\mathbb{T}\rightarrow \mathbb{R}$ be a measurable
function. The following statements are equivalent:

i) $f$ is $\Delta $-convergent,

ii) $f$ is $\Delta $-Cauchy,

iii) There exists a measurable and convergent function $g:\mathbb{T}%
\rightarrow \mathbb{R}$ such that $f(t)=g(t)~$for $\Delta $-$a.a.~t.$
\end{theorem}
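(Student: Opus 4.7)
The plan is to close the loop via $(i)\Rightarrow(ii)$, $(ii)\Rightarrow(i)$, $(iii)\Rightarrow(i)$, and $(i)\Rightarrow(iii)$. Two of the directions are cheap. For $(i)\Rightarrow(ii)$, I would take the witness set $K_{\varepsilon/2}$ for $\Delta$-convergence to $L$, pick any $t_{0}\in K_{\varepsilon/2}$, and observe that on $K_{\varepsilon/2}$ the triangle inequality delivers Cauchyness. For $(iii)\Rightarrow(i)$, the preceding proposition tells us $g$ is $\Delta$-convergent to its classical limit $L$, and
\[
\{t:|f(t)-L|\geq\varepsilon\}\subset\{t:|g(t)-L|\geq\varepsilon\}\cup\{t:f(t)\neq g(t)\}
\]
is a union of two sets in $\mathcal{M}_{d}^{0}$, hence in $\mathcal{M}_{d}^{0}$ by Lemma~\ref{qq}(viii),(ix); so $f$ is $\Delta$-convergent to $L$ by the first proposition of the section.

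For $(ii)\Rightarrow(i)$ I would extract the limit as a classical Cauchy limit. Using the equivalent form from the second proposition of the section, for each $k$ the $\Delta$-Cauchy property provides $t_{k}\in\mathbb{T}$ with $A_{k}:=\{t:|f(t)-f(t_{k})|\geq 1/k\}\in\mathcal{M}_{d}^{0}$. By Lemma~\ref{qq}(ix), $A_{j}\cup A_{k}\in\mathcal{M}_{d}^{0}$, so by (iii),(iv) its complement has density one and is nonempty; any point there certifies $|f(t_{j})-f(t_{k})|<1/j+1/k$, and so $(f(t_{k}))$ is Cauchy in $\mathbb{R}$ with some limit $L$. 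Given $\varepsilon>0$, fix $k$ with $1/k<\varepsilon/2$ and $|f(t_{k})-L|<\varepsilon/2$; the triangle inequality then yields $\{t:|f(t)-L|\geq\varepsilon\}\subset A_{k}$, which lies in $\mathcal{M}_{d}^{0}$ by (viii).

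The substantive step is $(i)\Rightarrow(iii)$. The idea is to flatten $f$ to $L$ on a carefully-chosen density-zero set $E$ so that the modified function converges classically. Put $F_{k}:=\{t:|f(t)-L|\geq 1/k\}$; each $F_{k}$ is measurable with $\Delta$-density zero, and crucially $F_{1}\subset F_{2}\subset\cdots$. I would pick $T_{1}<T_{2}<\cdots\to\infty$ in $\mathbb{T}$ with $\mu_{\Delta}(F_{k}(t))/(\sigma(t)-a)<1/k$ for all $t\geq T_{k}$, and set
\[
E=[a,T_{1})\cup\bigcup_{k=1}^{\infty}\bigl(F_{k}\cap[T_{k},T_{k+1})\bigr),\qquad g(t)=\begin{cases}L,&t\in E,\\ f(t),&t\notin E.\end{cases}
\]
If $t\notin E$ lies in $[T_{n},T_{n+1})$, then $t\notin F_{n}$, so $|g(t)-L|<1/n\to 0$ as $t\to\infty$; hence $g$ is measurable, converges classically to $L$, and $f=g$ off $E$. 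It remains to verify $\delta_{\Delta}(E)=0$, where the monotonicity is essential: for $k\leq n$ one has $F_{k}\subset F_{n}$, so for $t\in[T_{n},T_{n+1})$
\[
E\cap[a,t]\subset[a,T_{1})\cup\bigl(F_{n}\cap[T_{1},t]\bigr)\subset[a,T_{1})\cup F_{n}(t),
\]
whence $\mu_{\Delta}(E(t))/(\sigma(t)-a)\leq(T_{1}-a)/(\sigma(t)-a)+1/n\to 0$.

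The main obstacle is precisely this last density estimate: Lemma~\ref{qq}(ix) only handles \emph{finite} unions of density-zero sets, so a naive union $\bigcup_{k}F_{k}$ could easily have positive density. The blockwise telescoping along $[T_{k},T_{k+1})$ combined with the inclusion $F_{k}\subset F_{k+1}$ is what collapses the otherwise uncontrolled sum into the single term $\mu_{\Delta}(F_{n}(t))$ and forces $E$ to lie in $\mathcal{M}_{d}^{0}$.
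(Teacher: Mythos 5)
Your proof is correct, but it routes the implications differently from the paper, and the hard work lands in a different place. The paper proves the cycle $(i)\Rightarrow(ii)\Rightarrow(iii)\Rightarrow(i)$, with all the substance in $(ii)\Rightarrow(iii)$: there the limit is not yet known, so the authors manufacture it by intersecting a sequence of closed intervals $I_{m}$ of length $\leq 2^{1-m}$ (each containing $f(t)$ for $\Delta$-a.a.\ $t$) down to a single point $\lambda$, and only then perform the blockwise modification along $(T_{m},T_{m+1}]$, using the nesting $I_{m+1}\subset I_{m}$ to telescope the density estimate. You instead prove $(ii)\Rightarrow(i)$ directly, by extracting the real Cauchy sequence $(f(t_{k}))$ from the witness points --- using that the complement of $A_{j}\cup A_{k}$ has density one and is therefore nonempty --- and then do the construction of $g$ in $(i)\Rightarrow(iii)$, where the limit $L$ is already available. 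This lets you replace the nested intervals by the sets $F_{k}=\{t:|f(t)-L|\geq 1/k\}$, whose monotonicity $F_{k}\subset F_{k+1}$ is automatic and plays exactly the role that $I_{m+1}\subset I_{m}$ plays in the paper: in both arguments it is what collapses the union over blocks $[T_{k},T_{k+1})$ into a single term controlled by the choice of $T_{n}$. The shared device is the block decomposition with thresholds $T_{m}$; what your version buys is a cleaner separation of concerns (completeness of $\mathbb{R}$ handles $(ii)\Rightarrow(i)$, and the flattening construction only ever deals with a known limit), at the cost of proving four implications instead of three. All the individual steps check out, including the measurability of $E$ and $g$ and the estimate $\mu_{\Delta}(E(t))/(\sigma(t)-a)\leq(T_{1}-a)/(\sigma(t)-a)+1/n$ for $t\in[T_{n},T_{n+1})$.
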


\begin{proof}
(i)$\Rightarrow $(ii) : Let $\Delta $-$\lim_{t\rightarrow \infty }f(t)=L$
and $\varepsilon >0$ be given. Then $\left \vert f(t)-L\right \vert
<\varepsilon /2$ holds for $\Delta $-$a.a.~t$. We can choose $t_{0}\in 
\mathbb{T}$ such that $\left \vert f(t_{0})-L\right \vert <\varepsilon /2$
holds. So,%
\begin{equation*}
\left \vert f(t)-f(t_{0})\right \vert \leq \left \vert f(t)-L\right \vert
+\left \vert f(t_{0})-L\right \vert <\varepsilon ~~~~\Delta \text{-}a.a.~t.
\end{equation*}%
This shows that $f$ satisfies the property of $\Delta $-Cauchy.

(ii)$\Rightarrow $(iii) : We can choose an element $t_{1}\in \mathbb{T}$. We
can define an interval $I=[f(t_{1})-1,f(t_{1})+1]$ which contains $f(t)$ for 
$\Delta $-$a.a.~t.$ By the same method we can choose an element $t^{\ast
}\in \mathbb{T}$ and define an interval $I^{\prime }=[f(t^{\ast
})-1/2,f(t^{\ast })+1/2]$ which contains $f(t)$ for $\Delta $-$a.a.~t$. We
can write%
\begin{equation*}
\left \{ t\leq s:f(t)\notin I\cap I^{\prime }\right \} =\left \{ t\leq
s:f(t)\notin I\right \} \cup \left \{ t\leq s:f(t)\notin I^{\prime }\right \} .
\end{equation*}%
Since $f$ is a measurable function, the two terms appear on the
righthand-side of the last equality are also measurable. By using Lemma \ref%
{qq}-(vii) we obtain%
\begin{eqnarray*}
\delta _{\Delta }\left( \left \{ t\in \mathbb{T}:f(t)\notin I\cap I^{\prime
}\right \} \right)  &\leq &\delta _{\Delta }\left( \left \{ t\in \mathbb{T}%
:f(t)\notin I\right \} \right) +\delta _{\Delta }\left( \left \{ t\in \mathbb{T%
}:f(t)\notin I^{\prime }\right \} \right)  \\
&=&0.
\end{eqnarray*}%
So,  $f(t)$ is in the closed interval $I_{1}=I\cap I^{\prime }$ for $\Delta $%
-$a.a.~t.$ It is clear that the lengt of the interval $I_{1}$ is less than
or equal $1$. Now we can choose $t_{2}\in \mathbb{T}$ with\ $I^{\prime
\prime }=[f(t_{2})-1/4,f(t_{2})+1/4]$ contains $f(t)$ for $\Delta $-$a.a.~t.$
Undoubtly the closed interval $I_{2}=I_{1}\cap I^{\prime \prime }$ contains $%
f(t)$ for $\Delta $-$a.a.~t$ and the lengt of the interval $I_{2}$ is less
than or equal $1/2$. With the same procedure, for each $m$ we can obtain a
sequence of closed intervals $(I_{m})_{m=1}^{\infty }$such that $%
I_{m+1}\subset I_{m}$ and the lengt of each interval $I_{m}$ is less than or
equal $2^{1-m}$. Moreover $f(t)\in I_{m}$ for $\Delta $-$a.a.~t.$ From the
properties of the intersection of closed intervals there exists a real
number $\lambda $ such that $\tbigcap \nolimits_{m=1}^{\infty
}I_{m}=\{ \lambda \}.$ Since the $\Delta $-density of the set on which $%
f(t)\notin I_{m}$ is equal to zero , we can find an increasing sequence $%
(T_{m})_{m=1}^{\infty }$in $\mathbb{T}$ such that,%
\begin{equation}
\frac{\mu _{\Delta }\left( \left \{ t\leq s:f(t)\notin I_{m}\right \} \right) 
}{\sigma (s)-a}<\frac{1}{m}~\text{\ }~~~,\text{ \  \ }s>T_{m}.  \label{estz}
\end{equation}%
Here $a=\min \mathbb{T}$. Let consider the function $g:\mathbb{T}\rightarrow 
\mathbb{R}$ defined as following: 
\begin{equation*}
g(t)=\left \{ 
\begin{array}{ll}
\lambda  & ,~~~~~T_{m}<t\leq T_{m+1}~~\text{and \ }f(t)\notin I_{m} \\ 
f(t) & ,~~~~~\text{otherwise.}%
\end{array}%
\right. 
\end{equation*}%
It is clear that $g$ is a measurable function and $\lim \nolimits_{t%
\rightarrow \infty }g(t)=\lambda $. Indeed, for $t>T_{m}$ either $%
g(t)=\lambda $ or $g(t)=f(t)\in I_{m}.$ In this case $\left \vert
g(t)-\lambda \right \vert \leq 2^{1-m}$ holds.

Finally we shall show that $f(t)=g(t)$ for $\Delta $-$a.a.~t.$ For this
purpose consider, 
\begin{equation*}
\left \{ t\leq s:f(t)\neq g(t)\right \} \subset \left \{ t\leq s:f(t)\notin
I_{m}\right \} ,
\end{equation*}%
$T_{m}<s\leq T_{m+1}.$ Thus we get from (\ref{estz})%
\begin{equation*}
\frac{\mu _{\Delta }\left( \left \{ t\leq s:f(t)\neq g(t)\right \} \right) }{%
\sigma (s)-a}\leq \frac{\mu _{\Delta }\left( \left \{ t\leq s:f(t)\notin
I_{m}\right \} \right) }{\sigma (s)-a}<\frac{1}{m}
\end{equation*}%
this yielding $\delta _{\Delta }(\left \{ t\in \mathbb{T}:f(t)\neq
g(t)\right \} =0,$ that is, $f(t)=g(t)~$for $\Delta $-$a.a.~t.$

(iii)$\Rightarrow $(i) : Let $f(t)=g(t)~$for $\Delta $-$a.a.~t.$ and $\lim
\nolimits_{t\rightarrow \infty }g(t)=L.$ For a given $\varepsilon >0$ we
have,%
\begin{equation*}
\left \{ t\in \mathbb{T}:\left \vert f(t)-L\right \vert \geq \varepsilon
\right \} \subset \left \{ t\in \mathbb{T}:f(t)\neq g(t)\right \} \cup \left
\{ t\in \mathbb{T}:\left \vert g(t)-L\right \vert \geq \varepsilon \right \}
.
\end{equation*}%
Since $\lim \nolimits_{t\rightarrow \infty }g(t)=L$ , the second set appears
on the righthand-side of the above inclusion relation is bounded and thus $%
\delta _{\Delta }(\left \{ t\in \mathbb{T}:\left \vert g(t)-L\right \vert
\geq \varepsilon \right \} )=0$. In addition $f(t)=g(t)~$for $\Delta $-$%
a.a.~t$ yields $\delta _{\Delta }(\left \{ t\in \mathbb{T}:f(t)\neq
g(t)\right \} )=0$. In conclusion $\delta _{\Delta }(\left \{ t\in \mathbb{T}%
:\left \vert f(t)-L\right \vert \geq \varepsilon \right \} )=0,$ namely $%
\Delta $-$\lim_{t\rightarrow \infty }f(t)=L$.
\end{proof}

\end{document}